\newcounter{theorems}
\theoremstyle{plain}
\newcounter{lemma}
\numberwithin{equation}{section}
\newtheoremstyle{par}
     {\topsep}
     {\topsep}
     {\itshape}
     {}
     {\bfseries}
     {}
     {.5em}
     {}
\newtheoremstyle{parrm}
     {\topsep}
     {\topsep}
     {\normalfont}
     {}
     {\itshape}
     {}
     {.5em}
     {}
\theoremstyle{plain}
\numberwithin{equation}{section}
\newtheorem{theo}[equation]{Theorem}
\newtheorem{coro}[equation]{Corollary}
\theoremstyle{definition}
\newtheorem{defi}[equation]{Definition}
\newtheorem{example}[equation]{Example}
\theoremstyle{remark}
\newtheorem{remark}[equation]{Remark}
\theoremstyle{par}
\newtheorem{lemma}[equation]{}
\newtheorem{propo}[equation]{}
\theoremstyle{parrm}
\newcommand{\ind}{\operatorname{ind}}
\def\tagform@#1{\maketag@@@{\ignorespaces#1\unskip\@@italiccorr}}
\newcommand{\RR}{\mathbb{R}}
\newcommand{\CC}{\mathbb{C}}
\newcommand{\PP}{\mathbb{P}}
\newcommand{\from}{\colon}
\newcommand{\simbolovettore}[1]{{\boldsymbol{#1}}}
\newcommand{\vc}{\simbolovettore{c}}
\newcommand{\vq}{\simbolovettore{q}}
\newcommand{\vu}{\simbolovettore{u}}
\newcommand{\vv}{\simbolovettore{v}}
\newcommand{\vw}{\simbolovettore{w}}
\newcommand{\vx}{\simbolovettore{x}}
\newcommand{\vz}{\simbolovettore{z}}
\newcommand{\zero}{\boldsymbol{0}}
\newcommand{\abs}[1]{\lvert{#1}\rvert}
\newcommand{\Fix}{\operatorname{Fix}}
\newcommand{\term}[1]{\emph{#1}}
\newcommand{\norm}[1]{\lVert{#1}\rVert}
\newcommand{\grad}{\nabla}
\begin{document}
\pagenumbering{arabic}

\title{%
Fixed point indices of central configurations
}

\author{D.L.~Ferrario}

\date{%
\today}

\maketitle
\centerline{\itshape To Professor Andrzej Granas}

\begin{abstract}
Central configurations of $n$ point particles in $E\approx \RR^d$ with 
respect to a potential function $U$ are shown to be the same as the 
fixed points of the normalized gradient map 
$F=-\nabla_M U / \lVert \nabla_M U \rVert_M$, 
which is an $SO(d)$-equivariant self-map defined on the 
inertia ellipsoid. We show that the $SO(d)$-orbits of fixed points of $F$ 
are all fixed points of the map induced on the quotient by $SO(d)$, 
and give a formula relating their indices (as fixed points) with 
their Morse indices (as critical points). 
At the end, we give an example of a non-planar relative equilibrium which is
not a central configuration.

\noindent {\em MSC Subject Class\/}:
55M20,
37C25,
70F10.

\noindent {\em Keywords\/}: Central configurations, relative equilibria,
$n$-body problem.

\end{abstract}

\section{Central configurations}
\label{sec:intro}

Let $E=\RR^d$ be the $d$-dimensional Euclidean space, and 
$X = E^n \smallsetminus \Delta$ the
\term{configuration space} of $E$, defined as the
space of all $n$-tuples of distinct
points $\vq =(\vq_1,\ldots, \vq_n)  \in E^n$
such that $\vq \not\in \Delta$,
where $\Delta $ is the collision set
\[
\Delta = \bigcup_{i<j} \{ \vq  \in E^n: \vq_i = \vq_j \}~.
\]

Let $U\from X \to \RR$ be a regular
potential function. 
For example, the gravitational
potential
\begin{equation}
\label{eq:newtonianU}
U(\vq) = \sum_{i<j} \dfrac{m_im_j}{\norm{\vq_i-\vq_j}^\alpha},
\end{equation}
or more general potentials (charged particles, \textellipsis). 

Given $n$ real non-zero numbers $m_1$, \ldots, $m_n$ (representing
the \term{masses} of the $n$ interacting point particles), 
let $\langle -,- \rangle_M$ denote the \term{mass-metric} on $E^n$,
defined as 
$\langle \vv,\vw\rangle_M = \sum_{j=1}^n m_j \vv_j \cdot \vw_j$,
where $\vv,\vw\in \RR^{dn}$ are two vectors tangent to $E^n$, 
and $\cdot$ denotes the standard scalar product in $E$.
If the masses are positive, then the mass-metric is 
a non-degenerate
scalar product on $E^n$,
which yields
both the kinetic quadratic form (on the tangent bundle)
$2 K =
\norm{\dot\vq}^2_M =
\sum_{i} m_i \norm{\dot \vq_i}^2$
and the inertia form $I= \norm{\vq}_M^2 = \sum_i m_i \norm{\vq_i}^2$.

Given such $m_i$, let $\grad_M$ denote the gradient of $U$ 
with respect to the bilinear product $\langle -,-\rangle_M$. 
Let us recall that if $dU(\vq)$ denotes the differential of $U$ evaluated in $\vq$, then 
for each tangent vector $\vv \in T_{\vq}E^n $, 
$dU(\vq)[\vv] = \langle \grad_M U(\vq), \vv \rangle_M$,
and therefore 
\[
dU(\vq)[\vv] 
= 
\sum_{j} \dfrac{\partial U}{\partial \vq_j} \cdot \vv_j  
= 
\sum_{j} m_j \left( m_j^{-1} \dfrac{\partial U}{\partial \vq_j} \right) \cdot \vv_j ,
\] 
from which it follows that in standard coordinates
$(\grad_M U)_j = m_j^{-1}  \dfrac{\partial U}{\partial \vq_j}$.

Given the mass-metric gradient $\grad_M U$, the corresponding 
\term{Newton equations} are 
\begin{equation}\label{eq:newton}
\dfrac{d^2\vq}{dt^2} =  \grad_M U (\vq) .
\end{equation}

\begin{defi}
A configuration $\vq \in X$ is a
\term{central configuration} iff there exists $\lambda \in \RR$, $\lambda \neq 0$, such that 
$\grad_MU(\vq) = \lambda \vq$. 
\end{defi}

\begin{remark}
If $U$ is homogeneous, this is equivalent to:  
$\vq$ is a central configuration iff 
there is a real-valued function $\varphi(t)$ such that 
$\varphi(t)\vq$ is a solution of \ref{eq:newton}\footnote{%
Cf. \cite{MR856309}, p.~61. 
}. 
If $U$ is homogeneous, the set of central configurations is a cone in $X$. 
\end{remark}

\begin{remark}
Furthermore, if $U$ is invariant with respect to the group of all 
translations of $E$, 
then central configurations belong to  the subspace
\[
Y = \{ \vq \in X : \sum_{j} m_j \vq_j = 0 \} \subset X \subset E^n,
\]
and 
$\forall \vq \in X \implies \grad_M U(\vq) \in \overline Y\subset E^n$,
where $\overline Y$ is the closure of $Y$ in $E^n$.   
Therefore, if $U$ is translation-invariant, the set of central configurations 
is a subset of $Y$. Sometimes central configurations 
are defined with the equation
$\nabla_MU(\vq) = \lambda ( \vq - \vc)$,
where $\vc$ is the center of mass $\vc =(\sum_jm_j)^{-1}\sum_{j} m_j \vq_j$
of the configuration $\vq$. This equation is invariant with respect to translations
(if $U$ is so). 
\end{remark}


\begin{defi}
A configuration $\vq\in X$ is a 
\term{relative equilibrium} iff 
there is a one-parameter group of rotations $\varphi^t\from E \to E$   
(around the origin, without loss of generality)
such that 
\[ \varphi^t(\vq) = (\varphi^t(\vq_1),\varphi^t(\vq_2), \ldots, \varphi^t(\vq_n) ) \]
satisfies 
the equations of motion \ref{eq:newton}.\footnote{%
Cf. \cite{MR0321138}, p.~47.
}
\end{defi}

One-parameter subgroups of $SO(N)$ are of the form 
$\varphi^t(\vq_1) = e^{t\Omega}\vq_1$, with 
$\Omega$ skew-symmetric $N\times N$ (non-zero) matrix.\footnote{Cf. \cite{MR532830}
, p. 401.}
If $U$ is invariant with respect to the above-mentioned one-parameter
 group of rotations $\varphi^t =e^{t\Omega}$,  
a relative equilibrium satisfies the equation
\[
\Omega^2 \vq = \nabla_M U (\vq)~.
\]
It follows that if $\dim(E) = 2$, then $\Omega= \begin{bmatrix}0&-\omega\\\omega&0\end{bmatrix}$
with $\omega\in \RR$, $\omega\neq 0$,
and  therefore such a relative equilibrium configuration
is a central configuration, $-\omega^2 \vq =  \nabla_M U(\vq)$.
 Conversely, planar
central configurations with $U(\vq)>0$ (if $U$ is homogeneous of negative degree)
yield 
relative equilibria, with a suitable (angular speed) $\omega$.  

If $\dim(E) = 3$, then since the non-zero $3\times 3$ skew-symmetric matrix $\Omega$
has rank $2$, $E$ can be written as $\ker \Omega \oplus E'$, where
$\ker \Omega$ is the fixed direction of the rotations $\varphi^t = e^{t\Omega}$,
and 
$E'$ is the orthogonal complement of $\ker \Omega$. In a suitable
reference, 
$\Omega = \begin{bmatrix}0& -\omega& 0 \\ \omega & 0 & 0 \\ 0 & 0 & 0 \end{bmatrix}$.
For further reference, let 
\begin{equation}
\label{eq:projectionP}
P\from E \to E'
\end{equation}
denote the orthogonal projection.

If $U$ is the homogeneous Newtonian potential
of \ref{eq:newtonianU}
with $\alpha>0$ and $m_j>0$, 
then 
it is easy to see that relative equilibrium configurations
must belong to the plane $E'$. 
This is not true in general: it is possible to find examples
of relative equilibria which are not planar -- see \ref{example:c1c2c3} (and hence
they are not central configurations). 
%
For more on equilibrium (and homographic) solutions: 
\cite{wintner} (§369--§382bis at pp.~284--306),
\cite{betti1877sopra},
\cite{ac98}.

Recent and non recent relevant literature on central configurations 
is the 
following:
\cite{MR2139425},
\cite{Mo90},
\cite{MR0309153,MR0321138},
\cite{MR0321389,MR0363076,MR0413647},
\cite{MR856309},
\cite{Xia},
\cite{MR1320359},
\cite{HaM2207019},
\cite{MR2925390}.


From now on, unless otherwise stated, assume that $U$ is invariant with respect 
to all isometries in $E$, all masses $m_j>0$ 
are positive, 
and $U$ is homogeneous of negative degree $-\alpha$. 

The potential $U$ is invariant with respect to a suitable subgroup 
of $\Sigma_n \times O(E)$, where $\Sigma_n$ is the 
symmetric group on $n$ elements and $O(E)$ denotes the 
orthogonal group on the euclidean space $E$. 
For example, if all masses are equal and $U$ is defined as in 
\eqref{eq:newtonianU}, then $G=\Sigma_n \times O(E)$; 
if all masses are distinct, then $G = \{1\} \times O(E)$. 

The following proposition is a well-known characterization of the set of 
central configurations, which we will
generalize to relative equilibria in \ref{lemma:c1c2c3}. 

\begin{lemma}
Let $S\subset Y $ denote the inertia ellipsoid,
defined as $S = \{ \vq  \in Y : \lVert \vq \rVert_M^2 = 1 \}$.  
A point $\vq\in S$ is a central configuration
if and only if it is  a critical point of the restriction 
of $U$ to $S$. 
\end{lemma}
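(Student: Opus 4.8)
The plan is to characterize the criticality of $U|_S$ by a Lagrange-multiplier argument adapted to the mass-metric, and then to read off the central-configuration equation directly. First I would identify the tangent space to $S$ at a point $\vq \in S$. Since $Y$ is an open subset of the linear subspace $\overline Y = \{\vv \in E^n : \sum_j m_j \vv_j = 0\}$, and $S$ is the unit sphere of the mass-metric inside $\overline Y$, one has $T_{\vq}S = \{\vv \in \overline Y : \langle \vq, \vv\rangle_M = 0\}$. Using the identity $dU(\vq)[\vv] = \langle \grad_M U(\vq), \vv\rangle_M$ recalled above, the point $\vq$ is critical for $U|_S$ precisely when $\langle \grad_M U(\vq), \vv\rangle_M = 0$ for every $\vv \in \overline Y$ that is orthogonal, in the mass-metric, to $\vq$.

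The key simplification comes from translation invariance: by the second Remark, $\grad_M U(\vq) \in \overline Y$ for every $\vq \in X$. Hence $\grad_M U(\vq)$ already lies in the inner-product space $(\overline Y, \langle -,-\rangle_M)$ (which is genuinely positive definite because the masses are positive), and the criticality condition says exactly that $\grad_M U(\vq)$ is orthogonal there to the hyperplane $T_{\vq}S = \vq^{\perp} \cap \overline Y$. Therefore $\grad_M U(\vq)$ lies in the orthogonal complement of $T_{\vq}S$ inside $\overline Y$, which is the line $\RR\vq$; that is, $\grad_M U(\vq) = \lambda \vq$ for some $\lambda \in \RR$. Conversely, if $\grad_M U(\vq) = \lambda \vq$, then $dU(\vq)[\vv] = \lambda\langle \vq, \vv\rangle_M = 0$ for every $\vv \in T_{\vq}S$, so $\vq$ is critical; this settles the easy direction at once.

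What remains, and is the only genuinely delicate point, is the nonvanishing condition $\lambda \neq 0$ required in the definition of a central configuration. Here I would invoke the homogeneity of $U$: Euler's identity for a function homogeneous of degree $-\alpha$ gives $dU(\vq)[\vq] = -\alpha\, U(\vq)$, while the gradient relation yields $dU(\vq)[\vq] = \langle \lambda \vq, \vq\rangle_M = \lambda \lVert \vq\rVert_M^2 = \lambda$, since $\lVert \vq\rVert_M^2 = 1$ on $S$. Thus $\lambda = -\alpha\, U(\vq)$, and because $\alpha > 0$ and $U > 0$ for the positive-mass potentials under consideration, $\lambda \neq 0$. This closes the remaining implication and, as a byproduct, pins the multiplier to $\lambda = -\alpha\, U(\vq)$, a relation presumably useful later in the index/Morse-index comparison. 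The main obstacle, then, is not the Lagrange bookkeeping but ensuring the multiplier is nonzero, which forces one to use both the homogeneity and the sign of $U$ rather than regularity alone.
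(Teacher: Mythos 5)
Your proof is correct and follows essentially the same route as the paper's: a Lagrange-multiplier/orthogonality argument with respect to the mass metric, in which criticality of $U|_S$ forces $\grad_M U(\vq)$ to lie on the line $\RR\vq$ (the paper compresses this into two lines, leaving the membership $\grad_M U(\vq)\in\overline Y$ implicit). Your additional step pinning down $\lambda=-\alpha\,U(\vq)\neq 0$ via Euler's identity addresses a point the paper's proof silently skips; just note that the positivity $U>0$ you invoke there is only introduced as a standing assumption in Section 2, so strictly speaking at this point one needs $U(\vq)\neq 0$ at the critical point (automatic for the Newtonian potential with positive masses).
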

\begin{proof}
Critical points of $U|_S$ are points $\vq\in Y$ such that 
$\ker dU \supset T_\vq S$. With respect to the (non-degenerate) bilinear
form $\langle -,-\rangle_M$, this can be written as 
$\nabla_M U(\vq) = \lambda \nabla_M (\norm{\vq}_M^2) = 2 \lambda \vq$.
\end{proof}

\begin{lemma}
\label{lemma:c1c2c3}
Assume $\dim(E) = 3$. 
Let $C$ be the \emph{vertical cylinder} defined as 
\[
C = \{ \vq \in Y : \langle P \vq , \vq \rangle_M =  c~,
\}
\]
where $P$ is the projection of $E$ to $E'$ as in \ref{eq:projectionP}
and $c= \langle P \overline \vq, \overline \vq \rangle_M$. 
A configuration $\overline\vq \in Y$ is a relative equilibrium configuration 
rotating by $e^{t\Omega}$ if 
and only if it is a critical point of $U$ restricted to 
$C\subset X$ and $U(\overline\vq)>0$.
\end{lemma}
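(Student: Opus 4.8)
The plan is to mimic the Lagrange-multiplier proof of the preceding lemma, but now with the constraint defining the vertical cylinder $C$ rather than the inertia sphere $S$. The two things to reconcile are: (i) the variational condition ``$\overline\vq$ is a critical point of $U|_C$'' and (ii) the dynamical condition ``$\Omega^2\overline\vq=\nabla_MU(\overline\vq)$'', which is the relative-equilibrium equation already derived in the excerpt. The extra hypothesis $U(\overline\vq)>0$ should be exactly what pins down the sign (and hence the existence) of the angular speed $\omega$, just as positivity of $U$ was needed to pass from planar central configurations to genuine relative equilibria.

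First I would compute $\nabla_M$ of the constraint function $g(\vq)=\langle P\vq,\vq\rangle_M$. Since $P$ is $\langle-,-\rangle_M$-self-adjoint (it is an orthogonal projection in $E$, applied blockwise, and the mass-metric is block-diagonal in the masses), one gets $\nabla_M g(\vq)=2P\vq$. Hence $\overline\vq$ is a critical point of $U|_C$ exactly when $\ker dU(\overline\vq)\supset T_{\overline\vq}C$, which by non-degeneracy of $\langle-,-\rangle_M$ translates into
\[
\nabla_MU(\overline\vq)=2\lambda\,P\overline\vq
\]
for some Lagrange multiplier $\lambda\in\RR$. I would then identify $2\lambda P$ with $\Omega^2$: in the adapted basis where $\Omega=\begin{bmatrix}0&-\omega&0\\\omega&0&0\\0&0&0\end{bmatrix}$ one has $\Omega^2=\begin{bmatrix}-\omega^2&0&0\\0&-\omega^2&0\\0&0&0\end{bmatrix}=-\omega^2 P$, so the two equations coincide provided $2\lambda=-\omega^2$, i.e. $\lambda<0$. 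Thus the relative-equilibrium equation and the critical-point equation are literally the same equation, the only content being the admissible sign of the multiplier.

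The remaining step is to show that the sign condition $\lambda<0$ is equivalent to $U(\overline\vq)>0$, and this is where the homogeneity of $U$ enters. Pairing $\nabla_MU(\overline\vq)=2\lambda P\overline\vq$ with $\overline\vq$ in the mass-metric gives $\langle\nabla_MU(\overline\vq),\overline\vq\rangle_M=2\lambda\langle P\overline\vq,\overline\vq\rangle_M=2\lambda c$, while Euler's identity for a function homogeneous of degree $-\alpha$ yields $\langle\nabla_MU(\overline\vq),\overline\vq\rangle_M=dU(\overline\vq)[\overline\vq]=-\alpha\,U(\overline\vq)$. Since $\alpha>0$ and $c=\langle P\overline\vq,\overline\vq\rangle_M\ge 0$ is the squared $E'$-``radius'' of the configuration (strictly positive once the configuration is not contained in the rotation axis, which a genuine relative equilibrium is not), one reads off $-\alpha\,U(\overline\vq)=2\lambda c$, so $U(\overline\vq)>0\iff\lambda<0$. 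Setting $\omega^2=-2\lambda=\alpha\,U(\overline\vq)/c>0$ then recovers a bona fide one-parameter rotation group $e^{t\Omega}$.

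I expect the main obstacle to be the boundary case $c=0$, i.e. configurations living entirely on the rotation axis $\ker\Omega$: there $C$ degenerates, $P\overline\vq=0$ forces $\nabla_MU(\overline\vq)=0$ which cannot happen for the Newtonian-type $U$, and one must argue that such configurations are not relative equilibria and are excluded by $U(\overline\vq)>0$ together with the constraint being non-trivial. A secondary technical point is checking self-adjointness of $P$ with respect to $\langle-,-\rangle_M$ and that $T_{\overline\vq}C$ is genuinely the $\langle-,-\rangle_M$-orthogonal complement of $P\overline\vq$ inside $T_{\overline\vq}Y$ (so that the Lagrange-multiplier reduction is valid); both are routine but must be stated so that the equivalence $\ker dU\supset T_{\overline\vq}C\iff\nabla_MU=2\lambda P\overline\vq$ is exactly an ``if and only if'' and not merely one implication.
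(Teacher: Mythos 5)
Your proposal is correct and follows essentially the same route as the paper's proof: the Lagrange-multiplier identity $\nabla_M\bigl(\langle P\vq,\vq\rangle_M\bigr)=2P\vq$, the identification of the critical-point equation $\nabla_MU=2\lambda P\vq$ with the relative-equilibrium equation $\nabla_MU=\Omega^2\vq=-\omega^2P\vq$, and Euler's identity $\langle\nabla_MU(\vq),\vq\rangle_M=-\alpha U(\vq)$ together with $\langle P\vq,\vq\rangle_M=\norm{P\vq}_M^2\ge 0$ to match the sign of the multiplier with the condition $U(\overline\vq)>0$. Your explicit attention to the degenerate case $P\overline\vq=0$ is a point the paper leaves implicit, and it is handled correctly.
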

\begin{proof}
The configuration $\vq$ is a relative equilibrium configuration 
if and only if 
$\Omega^2 \vq = \nabla_M U(\vq)$; since 
$\Omega^2 = -\omega^2 P$ for an $\omega\neq 0$,  
this is equivalent to 
\[
\nabla_M U(\vq) = - \omega^2 P\vq. 
\]

On the other hand, $\nabla_M \left( \langle P\vq, \vq \rangle_M \right) 
= 2 P\vq$,
hence $\vq \in C$ is a critical point of the restriction $U|_C $ iff 
$\nabla_M U(\vq) = 2 \lambda  P \vq$.  
The proof follows since, by homogeneity,
\[
\langle \nabla_M U(\vq) , \vq \rangle_M = -\alpha U(\vq)
\]
and $\langle P\vq, \vq \rangle_M = \norm{P\vq}^2_M$.
\end{proof}

\begin{lemma}
\label{lemma:equivCC}
Let $K$ be a subgroup of the symmetry group $G$ of $U$ on $Y$. 
Then the inertia ellipsoid $S$ is $K$-invariant, 
and critical points of 
the restriction of $U$ to $S^K= \{ \vq \in S : K\vq = \vq\}$ 
are precisely the critical points of $U|_S$ belonging to $S^K$. 
If the vertical cylinder $C$ is $K$-invariant, then critical points
of the restriction of $U$ to $C^K = \{ \vq \in C : K\vq=\vq \}$ are  
precisely the critical points of $U|_C$ belonging to $C^K$. 
\end{lemma}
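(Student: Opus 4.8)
The plan is to recognize this as an instance of Palais' principle of symmetric criticality and to prove it directly via the orthogonal decomposition of $Y$ induced by the $K$-action. First I would record the two structural facts on which everything rests. Since $K$ is a subgroup of the symmetry group $G \subset \Sigma_n \times O(E)$ of $U$, it acts on $Y$ preserving both $U$ and the mass-metric $\langle -,-\rangle_M$: a permutation in $G$ can only mix equal masses, so it preserves $\sum_j m_j \vv_j\cdot\vw_j$, and the orthogonal part preserves $\cdot$. In particular $\norm{\vq}_M^2$ is $K$-invariant, so $S=\{\vq\in Y:\norm{\vq}_M^2=1\}$ is $K$-invariant, which is the first assertion. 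Because $K$ acts by isometries, the fixed subspace $Y^K$ and its mass-orthogonal complement $(Y^K)^{\perp}$ furnish a $K$-invariant orthogonal splitting $Y = Y^K \oplus (Y^K)^{\perp}$.

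Next I would invoke equivariance of the gradient. For any $\vq$, invariance of $U$ gives $g\,\nabla_M U(\vq) = \nabla_M U(g\vq)$ for $g\in K$; evaluating at a fixed point $\vq\in S^K$ (so $g\vq=\vq$) yields $\nabla_M U(\vq)\in Y^K$. The relevant tangent spaces are $T_\vq S = \{v\in Y : \langle v,\vq\rangle_M = 0\}$ and $T_\vq(S^K) = T_\vq S \cap Y^K$, and a point $\vq\in S$ is critical for $U|_S$ exactly when $\langle \nabla_M U(\vq), v\rangle_M = 0$ for all $v\in T_\vq S$ (the characterization already used in the proof of the first lemma). One inclusion is then immediate: if $\vq\in S^K$ is critical for $U|_S$, the covector $dU(\vq)$ annihilates all of $T_\vq S$, hence in particular its subspace $T_\vq(S^K)$, so $\vq$ is critical for $U|_{S^K}$.

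For the converse — the substance of the lemma — suppose $\vq\in S^K$ is critical for $U|_{S^K}$. Given any $v\in T_\vq S$, split $v = v_0 + v_1$ with $v_0\in Y^K$ and $v_1\in (Y^K)^{\perp}$. Since $\nabla_M U(\vq)\in Y^K$, we have $\langle \nabla_M U(\vq), v_1\rangle_M = 0$. Moreover $\vq\in Y^K$ forces $\langle v_1,\vq\rangle_M = 0$, so from $\langle v,\vq\rangle_M = 0$ we get $\langle v_0,\vq\rangle_M = 0$, i.e. $v_0\in T_\vq(S^K)$; criticality on $S^K$ then gives $\langle \nabla_M U(\vq), v_0\rangle_M = 0$. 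Adding the two, $\langle \nabla_M U(\vq), v\rangle_M = 0$ for every $v\in T_\vq S$, so $\vq$ is critical for $U|_S$.

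Finally the cylinder statement follows by repeating this argument verbatim with $C$ in place of $S$, using $T_\vq C = \{v : \langle P\vq, v\rangle_M = 0\}$ and $T_\vq(C^K) = T_\vq C \cap Y^K$. The one point needing care — and the only place the hypothesis that $C$ be $K$-invariant is used — is that the constraint vector $P\vq$ must lie in $Y^K$ when $\vq\in Y^K$, so that $\langle v_1, P\vq\rangle_M = 0$ and the splitting again respects $T_\vq(C^K)$. This is precisely the $K$-equivariance of the mass-gradient $2P\vq$ of the $K$-invariant constraint function $\vq\mapsto\langle P\vq,\vq\rangle_M$, equivalently the statement that $K$ preserves the splitting $E = \ker\Omega \oplus E'$. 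I expect this verification that $P\vq\in Y^K$ to be the main obstacle; everything else is formal once the orthogonal $K$-decomposition and the equivariance of $\nabla_M U$ are in hand.
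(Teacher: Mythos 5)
Your proof is correct, but it takes a genuinely different route from the paper's. The paper's entire proof is a one-line appeal to Palais' principle of symmetric criticality: since $U|_S$ (resp.\ $U|_C$) is a smooth $K$-invariant function on the $K$-invariant manifold $S$ (resp.\ $C$), a symmetric point that is critical for the restriction to the fixed-point set is critical, the converse direction being formal. What you do instead is reprove that principle directly in this finite-dimensional Riemannian setting, via the $M$-orthogonal splitting $Y=Y^K\oplus (Y^K)^{\perp}$ and the equivariance $\nabla_M U(g\vq)=g\,\nabla_M U(\vq)$, which places $\nabla_M U(\vq)$ in $Y^K$ at symmetric points; this is essentially Palais' own argument in the isometric case, so the mathematical core is the same. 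Your version buys self-containedness and makes explicit exactly where the hypotheses enter (that $K$ acts by mass-metric isometries because permutations in $G$ mix only equal masses, and, for the cylinder, that the constraint covector $P\vq$ stays in $Y^K$); the paper's version buys brevity. One point to tighten: the lemma assumes only that the level set $C$ is $K$-invariant, not that the function $\vq\mapsto\langle P\vq,\vq\rangle_M$ is $K$-invariant (equivalently, that $K$ commutes with $P$), so your closing claim that $P\vq\in Y^K$ ``is precisely the $K$-equivariance of the mass-gradient of the $K$-invariant constraint function'' assumes slightly more than the stated hypothesis. It can, however, be deduced from set-invariance alone: for $g\in K$ and $\vq\in C^K$, the isometry $g$ fixes $\vq$ and maps $C$ to $C$, hence preserves $T_\vq C$ and therefore its $M$-orthogonal line $\RR\cdot P\vq$, so $g(P\vq)=\pm P\vq$; the sign $-1$ would force $c=\langle g(P\vq),g\vq\rangle_M=-c$, which is impossible since $c=\norm{P\overline\vq}_M^2>0$ in the situation of interest (a configuration rotating nontrivially has $P\overline\vq\neq\zero$). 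With that small patch your argument is complete.
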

\begin{proof}
The 
Palais principle of Symmetric Criticality
\cite{palais} states that \emph{in order for a symmetric point $p$
to be a critical point it suffices that it be a critical point of 
$f|_\Sigma$, the restriction of $f$ to $\Sigma$},
where $f\from M \to \RR$ is a smooth $K$-invariant function on $M$,
$M$ is a smoorh manifold and a symmetric point is an element 
of $\Sigma = M^K = \{ p\in M : gp=p \text{ for all } g\in K \}$. 
Thus, the statement follows by setting $f=U|_M$ with $M=S$ or $M=C$.
\end{proof}

\section{Central configurations as (equivariant) fixed points.}

In \cite{MR2372989} a way 
to relate planar central configurations to projective classes of fixed points 
was introduced.
We now generalize the results therein to arbitrary dimensions. 

Consider a homogeneous potential $U$, as above  with the further 
assumption that $\forall \vq, U(\vq)>0$. From this it follows that 
$\nabla_MU(\vq) \neq \zero$  because
 $\langle \nabla_M U(\vq), \vq \rangle_M = -\alpha U(\vq)$.

\begin{propo}
\label{propo:mapFStoS}
The map
$F\from S \to S$ defined as 
\[
F(\vq) = - \dfrac{ \grad_M U(\vq)}{\norm{\grad_M U(\vq) }_M }
\]
is well-defined, and 
a configuration $\bar\vq \in  S$ is a central configuration if and only if 
it is a fixed point of $F$. 
\end{propo}
\begin{proof}
It follows from the assumption that  
$\forall \vq, \nabla_MU(\vq) \neq \zero$,
and therefore $F$ is well-defined. 
A configuration $\vq$ is fixed by $F$ if and only if there 
exists $\lambda = \norm{\grad_M U(\vq)}_M >0$ such that 
$\nabla_M U(\vq) = - \lambda \vq$. Hence, if $F(\vq) = \vq$ 
then $\vq$ is central. 
 Conversely, 
by homogeneity of $U$,  
$\langle \nabla_M U(\vq), \vq \rangle_M = -\alpha U(\vq) $,
and hence if $\vq\in S$ is a central configuration 
then $\nabla_M U(\vq) = \lambda \vq $ $\implies $ 
$\lambda = -\alpha U(\vq)<0$ and therefore $F(\vq) = \vq$. 
\end{proof}

Now, 
in general (and without the positivity assumption)
the map $F$ needs not being compactly fixed (see for example 
Robert's continuum \cite{MR1669486} of central configurations 
with four unit masses and a fifth negative $-1/4$ mass in the origin\footnote{%
In the AMS review of \cite{MR1669486}, D. Saari
states that {``a similar effect occurs for positive masses, 
but with two or more homogeneous potentials''}. 
More precisely (in review MR1695029), ``{should the potential be a sum of homogeneous potentials, then there always is a continuum of different relative equilibria configurations}''. 
In review MR2245515:
``there can be a continuum of different central configurations parametrized by the value of the moment of inertia''.
}). 
In the gravitational case (positive masses and Newtonian mutual
attraction), the map $F$ turns out to be compactly fixed \cite{MR2372989} (see 
also Shub's estimates \cite{MR0278700}). 


\begin{propo}
\label{propo:inducedF}
Let $G$  be the symmetry group of $U$ on $Y$, as above.  
Then $F$ is $G$-equivariant (that is, for each $g\in G$ and for each $\vq\in S$, 
$F(g\vq) = gF(\vq)$). For each subgroup $K\subset G$, 
$F$ induces a self-map $\bar F \from S/K  \to S/K$ 
on the quotient space $S/K$.
\end{propo}
\begin{proof}
For $g\in G$, $gS=S$, and for each $g$ such that 
$U(g\vq) = U(\vq)$ the equality $\nabla_MU(g\vq) = g \nabla_M U(\vq)$ 
holds. 
In fact, 
since $U\circ g = U$, $dU \circ g = dU$, and therefore
for each vector $\vv$ one has
$
\langle \nabla_M U(\vq), \vv \rangle_M  = 
dU(\vq)[\vv] = dU(g\vq) [g\vv] = 
\langle \nabla_MU(g\vq),g\vv\rangle_M = 
\langle g^{-1} \nabla_M U(g\vq), \vv\rangle_M
$.
Thus $F$ is $G$-equi\-va\-ri\-ant, and hence $K$-equi\-va\-ri\-ant for each 
subgroup $K\subset G$. 
\end{proof}

Let $U$ be a homogeneous potential with the following property: 
for each orthogonal projection $p \from E \to \eta$ on a plane $\eta$, 
for each $\vq \in S$ there exists $j \in \{ 1, \ldots, n \}$ 
such that 
\begin{equation} 
\label{eq:propertyF}
p( \dfrac{\partial U}{\partial \vq_j} (\vq) ) \cdot p ( \vq_j) \leq 0~.
\end{equation}
Moreover, if there exists $i\in \{1,\ldots, n\}$ such that $p(\vq_i) \neq 0$, 
then the $j$ of \eqref{eq:propertyF} is such that 
$p(\vq_j)\neq 0$. 

It is easy to see that the Newtonian potential \eqref{eq:newtonianU}
(with positive masses and homogeneity $-\alpha$)
satisfies 
Property \eqref{eq:propertyF}:
let $j$ be the index maximizing 
$\norm{p(\vq_k)}^2$ for $k=1,\ldots, n$; then 
\[
\begin{aligned}
p \left( 
\alpha \sum_{k\neq j} \dfrac{m_j m_k(\vq_k - \vq_j)}{\norm{\vq_k - \vq_j}^{\alpha+2} }
\right) \cdot p(\vq_j)   & = 
\alpha \left( 
 \sum_{k\neq j} \dfrac{m_j m_k(p(\vq_k) - p(\vq_j))}{\norm{\vq_k - \vq_j}^{\alpha+2} }
\right) \cdot p(\vq_j) \\
&= 
\alpha 
 \sum_{k\neq j} \dfrac{m_j m_k(p(\vq_k)\cdot p(\vq_j) - \norm{p(\vq_j)}^2)}{\norm{\vq_k - \vq_j}^{\alpha+2} }
\\
& \leq \alpha 
 \sum_{k\neq j} \dfrac{m_j m_k(\norm{p(\vq_k)}\norm{ p(\vq_j)} - \norm{p(\vq_j)}^2)}{\norm{\vq_k - \vq_j}^{\alpha+2} }
\\
& \leq 0
\end{aligned} 
\]
It is trivial to see that 
if $U$ satisfies \eqref{eq:propertyF}, then the map $F\from S \to S$ defined in 
\ref{propo:mapFStoS} satisfies the following property:
for each orthogonal projection $p\from E \to P$ on a plane $P$, 
for each $\vq\in S$,  there exists $j\in \{1,\ldots, n\}$ such that 
\begin{equation}
\label{eq:propertyFF}
p( F_j(\vq) ) \cdot p(\vq_j) \geq 0~.
\end{equation}

\begin{theo}
\label{theo:FF} 
Suppose that $G\supset SO(E)$, where as above $G$ is the symmetry group of $U$ 
on $Y$, and $SO(E)=SO(d)$ (the group of rotations in $E$) acting diagonally on $Y$. 
Let $\overline{F}\from S/SO(d) \to S/SO(d)$ be the map 
induced on the quotient space  by
\ref{propo:inducedF}.
Assume that all masses are positive, $U>0$ and \ref{eq:propertyFF} holds. 
Then, if 
$\pi\from S\to S/SO(d)$ 
denotes 
the projection on the quotient,
\[
\pi(\Fix(F))= \Fix(\overline{F})~.
\]
\end{theo}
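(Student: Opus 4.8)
The plan is to prove the two inclusions of the claimed set equality separately. The inclusion $\pi(\Fix(F)) \subseteq \Fix(\overline{F})$ is the easy half and follows formally from equivariance: if $\vq \in \Fix(F)$, then since $F$ is $SO(d)$-equivariant by \ref{propo:inducedF}, the induced map $\overline F$ on $S/SO(d)$ satisfies $\overline F(\pi(\vq)) = \pi(F(\vq)) = \pi(\vq)$, so $\pi(\vq) \in \Fix(\overline F)$. This direction requires no hypotheses beyond equivariance and holds for any group action.

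The substance of the theorem is the reverse inclusion $\Fix(\overline F) \subseteq \pi(\Fix(F))$. Here I would take a point $\pi(\vq) \in \Fix(\overline F)$ and try to show that its $SO(d)$-orbit actually contains a genuine fixed point of $F$. The hypothesis $\overline F(\pi(\vq)) = \pi(\vq)$ means exactly that $F(\vq)$ and $\vq$ lie in the same $SO(d)$-orbit, i.e.\ there exists $g \in SO(d)$ with $F(\vq) = g\vq$. First I would write out what $F(\vq) = g\vq$ says explicitly, namely $-\nabla_M U(\vq) = \lambda\, g\vq$ with $\lambda = \norm{\nabla_M U(\vq)}_M > 0$. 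The goal is then to show that $g$ can be taken to be the identity (after possibly replacing $\vq$ by another point in its orbit), which by \ref{propo:mapFStoS} would make that point a central configuration and hence a true fixed point of $F$.

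The key structural step is to analyze the rotation $g \in SO(d)$. I expect the argument to hinge on Property \eqref{eq:propertyFF}. The plan is to decompose $E$ according to the eigenspace structure of $g$: a rotation in $SO(d)$ splits $E$ into a fixed subspace (eigenvalue $1$) and a sum of two-dimensional rotation planes on which $g$ acts by a genuine rotation through a nonzero angle. On any such invariant plane $\eta$ with orthogonal projection $p \from E \to \eta$, the relation $F(\vq) = g\vq$ gives $p(F_j(\vq)) = p(g\vq_j) = g\,p(\vq_j)$ for each particle index $j$ (using that $p$ and $g$ commute on the $g$-invariant plane). Taking the inner product with $p(\vq_j)$ yields $p(F_j(\vq)) \cdot p(\vq_j) = (g\,p(\vq_j)) \cdot p(\vq_j) = \cos\theta \,\norm{p(\vq_j)}^2$, where $\theta$ is the nonzero rotation angle on $\eta$. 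If $\theta \in (0,\pi)$ so that $\cos\theta < 1$, I would use \eqref{eq:propertyFF}, which guarantees an index $j$ (with $p(\vq_j) \neq 0$ when some projected component is nonzero) satisfying $p(F_j(\vq)) \cdot p(\vq_j) \geq 0$; combined with the computation this forces $\cos\theta \,\norm{p(\vq_j)}^2 \geq 0$, and pushing this inequality together with the geometric constraint that the projection onto the whole plane is nontrivial should rule out any genuine rotation, i.e.\ force $\theta = 0$ on every invariant plane meeting the support of $\vq$.

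The main obstacle I anticipate is the bookkeeping of which particles project nontrivially onto each rotation plane, and ensuring that the single index $j$ furnished by \eqref{eq:propertyFF} is correctly coupled to the plane $\eta$ under consideration. Property \eqref{eq:propertyFF} supplies only \emph{one} good index per projection, so I must apply it to the specific projection $p$ onto each two-dimensional rotation eigenplane and argue that the resulting sign contradiction (for $\theta \neq 0$) is sharp enough; the clause in \eqref{eq:propertyF}--\eqref{eq:propertyFF} guaranteeing $p(\vq_j) \neq 0$ whenever some $p(\vq_i) \neq 0$ is precisely what prevents the degenerate case $p(\vq_j) = 0$ from trivializing the inequality. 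Once I show $g$ fixes every plane on which $\vq$ has nonzero projection, I would conclude that $g\vq = \vq$, whence $F(\vq) = \vq$ and $\vq \in \Fix(F)$, giving $\pi(\vq) \in \pi(\Fix(F))$ and completing the reverse inclusion.
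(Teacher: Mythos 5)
Your set-up matches the paper's strategy — the easy inclusion via equivariance, writing $F(\vq)=g\vq$, putting $g$ in block normal form, projecting onto the $2$-dimensional rotation planes, and aiming a sign contradiction via \eqref{eq:propertyFF} — but the contradiction you extract is not strong enough, and this is a genuine gap. On a rotation plane with angle $\theta$ your computation gives $p(F_j(\vq))\cdot p(\vq_j)=\cos\theta\,\norm{p(\vq_j)}^2$ for \emph{every} $j$, while \eqref{eq:propertyFF} asserts the \emph{existence} of one $j$ (with $p(\vq_j)\neq 0$) for which this quantity is $\geq 0$. These two statements conflict only when $\cos\theta<0$. When $\cos\theta\geq 0$, i.e.\ $\theta\in(0,\pi/2]$, every index $j$ satisfies the inequality of \eqref{eq:propertyFF} and no contradiction arises; so your argument rules out only the angles in $(\pi/2,\pi]$ and cannot "force $\theta=0$" as claimed. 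Note also that your proof never uses the hypothesis that the masses are positive, which is stated explicitly in the theorem — a sign that something essential is missing.

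The missing ingredient is the step the paper performs \emph{before} invoking \eqref{eq:propertyFF}: write $g=e^{\Omega}$ with block angles $\theta_i\in[-\pi,\pi]$ and use the rotation-invariance of $U$, which gives
\begin{equation*}
0=\frac{d}{dt}\,U(e^{t\Omega}\vq)\Big|_{t=0}=\langle \grad_M U(\vq),\Omega\vq\rangle_M
\quad\Longrightarrow\quad
0=\langle F(\vq),\Omega\vq\rangle_M=\sum_{j=1}^n m_j\,(e^{\Omega}\vq_j)\cdot(\Omega\vq_j),
\end{equation*}
where the last equality uses $F(\vq)=e^{\Omega}\vq$. Blockwise one computes $(e^{\Omega}\vx)\cdot(\Omega\vx)=\sum_i \theta_i\sin\theta_i\,\norm{p_i(\vx)}^2\geq 0$ for $\abs{\theta_i}\leq\pi$, so — and this is exactly where $m_j>0$ enters — every summand must vanish. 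Hence on each rotation plane either the whole configuration projects to zero (so that block acts trivially on $\vq$) or $\theta_i\sin\theta_i=0$, i.e.\ $\theta_i=\pm\pi$. Only after this reduction does your sign argument close: on a plane with $\theta=\pm\pi$ the rotation is $-\mathrm{Id}$, so $p(F_j(\vq))\cdot p(\vq_j)=-\norm{p(\vq_j)}^2$ for all $j$, and \eqref{eq:propertyFF} (with its clause guaranteeing $p(\vq_j)\neq 0$) yields the contradiction, forcing $g\vq=\vq$ and completing the reverse inclusion.
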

\begin{proof}
If $\vq\in \Fix(F) \subset S$ is fixed by $F$, then 
$\overline{F} ( \pi(\vq) ) = \pi ( F(\vq) ) = \pi(\vq) \implies
\pi(\vq) \in \Fix(\overline{F})$,
hence $\pi( \Fix(F)) \subset \Fix(\overline{F})$.  
On the other hand,
let $\pi(\vq) \in \Fix(\overline{F})$. 
Then there exists a rotation $g\in SO(E)$ such that $F(\vq) = g \vq$.
Without loss of generality, we can assume that 
$g= e^\Omega$, 
where $\Omega$ is an antisymmetric $d\times d$ matrix,
with $k$ $(2\times2)$-blocks on the diagonal
$\begin{bmatrix}0&-\theta_i\\\theta_i&0\end{bmatrix}$,
with $\theta_{i} \in [-\pi,\pi]$ for each $i=1,\ldots, k$,
and, if $d$ is odd, a one-dimensional diagonal zero entry
($d = 2k$ or $d=2k+1$). We can also assume that only the first 
(say, $l\leq k$) blocks have $\theta_i\neq 0$, 
hence $\Omega$ has $l$ non-zero $(2\times2)$ diagonal blocks
and is zero outside. Note that for each $\vx\in E$, 
the quadratic form $(e^\Omega \vx ) \cdot (\Omega \vx)$ on $E$ 
can be written with $l$ non-singular positive-defined blocks
\[
 \theta_i 
\begin{bmatrix}
\sin \theta_i  &  - \cos \theta_i \\
\cos \theta_i & \sin \theta_i 
\end{bmatrix} 
\sim 
 \theta_i 
\begin{bmatrix}
\sin \theta_i  &  0  \\
0  & \sin \theta_i 
\end{bmatrix} 
\]
on the diagonal, and hence it is non-negative. 
Here the symbol ``$\sim$'' means that the two matrices yield 
the same quadratic form. 
Moreover, if one writes 
$\vx \in E\cong \RR^d$  as $(\vz_1,\vz_2,\ldots, \vz_l,x_{2l+1}, \ldots, x_d)$,
with $\vz_i \in \RR^2$ for $i=1,\ldots, l$ and 
$x_i \in \RR$, 
then 
\begin{equation}\label{eq:thetasintheta}
(e^\Omega \vx ) \cdot (\Omega \vx) = 
\sum_{i=1}^l 
\theta_i \sin \theta_i \norm{\vz_i}^2   ~.
\end{equation} 
Let $p_i \from E \to \RR^2$ denote the projection 
$\vx \mapsto \vz_i$, 
for $i=1,\ldots, l$.  

Since $F(e^{t\Omega}\vq)$ does not depend on $t\in \RR$, 
\[
\begin{aligned}
0 & =\dfrac{d}{dt} \lVert F( e^{t\Omega}\vq ) \rVert^2_M |_{t=0} \\
&= 2 \langle F(\vq), \Omega \vq \rangle_M \\
& = 
2 \langle e^{\Omega} \vq, \Omega \vq \rangle_M   \\
&  = 2 \sum_{j=1}^n  m_j (e^{\Omega} \vq_j ) \cdot (\Omega \vq_j)
\end{aligned}
\]
For each $j=1,\ldots, n$ the inequality $m_j>0$ holds, and 
for each $\vx \in E$ the inequality 
 $( e^{\Omega} \vx ) \cdot (\Omega \vx)\geq 0$ holds:
it follows that  for each $j$,
\(
(e^{\Omega}\vq_j) \cdot (\Omega \vq_j) = 0  \).
By \eqref{eq:thetasintheta},
this implies that,
given $j$, for each $i=1,\ldots, l$
 either $p_i(\vq_j) = 0$ or 
$\theta_i \in \{\pi,-\pi\}$  (since $\theta_i \neq 0$ for $i=1,\ldots, l$).  
If $p_i(\vq_j) = 0$ for each $j$, then actually $g\vq = \vq$, and therefore 
$\pi(\vq) \in \pi ( \Fix(F))$. 
So, without loss of generality one can assume that for each $i=1,\ldots,l$ there 
exists $j$ such that  $p_i(\vq_j) \neq 0$. 
Suppose that $l\geq 1$, and therefore 
$\theta_1 = \pm \pi$. 
By property \ref{eq:propertyFF} 
there exists $\bar j$ such that 
$p_1(F_{\bar j}(\vq)) \cdot p_1(\vq_{\bar j}) \geq 0$ 
and $p_1(\vq_{\bar j}) \neq 0$. 
But this would imply that  
\[ F(\vq) = g\vq = e^{\Omega} \vq \implies 
- p_1(\vq_{\bar j}) \cdot p_1(\vq_{\bar j})  \geq 0 
\implies p_1(\vq_{\bar j}) = 0~,
\]
which is not possible. 
Therefore, if condition \eqref{eq:propertyFF} holds, 
$l=0$, and $g\vq = \vq$. 
The conclusion follows. 
\end{proof}

\section{Projective  fixed points and Morse indices}
\label{sec:projectivemorse}

In this section, we finally prove the equation relating fixed point and 
Morse indices of central configurations.

\begin{propo}
\label{propo:IFD2}
If $U$ is homogeneous of degree $-\alpha$, 
then for each central configuration $\vq$, up to a linear change of coordinates
\[
-\alpha U(\vq) (I - F'(\vq)) =  D^2\tilde U (\vq),
\]
where $F\from S \to S$ is the function of \ref{propo:mapFStoS}, 
defined as $F(\vq) = -\dfrac{\nabla_M U(\vq)}{\norm{\nabla_M U(\vq)}_M }$, 
and $D^2\tilde U(\vq)$ is the Hessian of the restriction $\tilde U$ of $U$ to $S$,
evaluated at $\vq$.  
\end{propo}
\newcommand{\mU}{\mathcal{U}}
\begin{proof}
As above, let 
$X = E^n \smallsetminus \Delta$ denote the
configuration space of $E$.
After a linear change of coordinates in $X$, we can assume $m_i=1$ for each $i$
and $\vq=(1,0,\ldots, 0)=(1,\zero)$ (rescale $\vq$ by a diagonal matrix with 
suitable entries
on its diagonal, and apply a rotation - this leaves $U$ homogeneous of the same degree). 
Given suitable linear coordinates $\vx=(x_0,x_1,\ldots, x_l)$ in  $Y\cong \RR^{l+1}$, 
the ellipsoid  $S$ has equation $\norm{\vx}^2 = 1$, 
and $F(\vx) = - \dfrac{\nabla U}{ \norm{\nabla U} } $,
in the $\vx$-coordinates. 
Therefore, if $\vu= (u_1,\ldots, u_l) \mapsto (\sqrt{1-\norm{\vu}^2}, u_1,\ldots, u_l)\in S$
is a local chart around the central configuration $\vq \sim (1,\zero) $,
\begin{equation}\label{eq:ps}
\begin{aligned}
\dfrac{\partial F_\alpha}{\partial u_\beta}(\zero) &= 
\dfrac{\partial^2 U}{\partial x_\alpha \partial x_\beta}  (\vq)
\norm{ \nabla U(\vq) }^{-1}  \\
D^2_{\alpha\beta} \tilde U(\zero) &= 
\dfrac{\partial^2 U}{\partial x_\alpha \partial x_\beta}(\vq) - \delta_{\alpha\beta} \dfrac{\partial U}{\partial x_0} (\vq) ~.
\end{aligned}
\end{equation}
Now,
$\dfrac{\partial U}{\partial x_0} = \langle \nabla U, \vq\rangle = -\alpha U( \vq)$,
and since $\vq$ is a central configuration, it is a fixed point of $F$ and therefore
\[
F(\vq) = \vq \implies 
\dfrac{\partial U}{\partial x_\alpha} = 0,
\quad
\text{ for } \alpha=1,\ldots, l
\]
and 
\begin{equation}
\label{eq:nablaUpartial}
\dfrac{\partial U}{\partial x_0}(\vq) = - \norm{\nabla U(\vq)} = -\alpha U(\vq) ~.
\end{equation}
From \eqref{eq:nablaUpartial} and 
\eqref{eq:ps} it follows that for $\alpha,\beta=1,\ldots, l$,
\begin{equation}
\begin{aligned}
D^2_{\alpha\beta} \tilde U(\zero) 
+ \delta_{\alpha\beta} \dfrac{\partial U}{\partial x_0}(\vq) 
& = 
\dfrac{\partial^2 U}{\partial x_\alpha \partial x_\beta}(\vq) 
= \dfrac{\partial F_\alpha}{\partial u_\beta}(\zero) 
\norm{ \nabla U(\vq) } \\
\implies 
D^2 \tilde U(\zero) &= - \alpha U(\vq) I   + \alpha U(\vq) F'(\vq) 
\end{aligned}
\end{equation}
\end{proof}

The \term{maximal isotropy stratum} in $S/SO(d)$ 
is the set of all orbits of points in $S$ with trivial isotropy (and hence
with maximal isotropy type). 
It is an open (and dense) subspace of $S/SO(d)$.

\begin{coro}
\label{coro:index}
Assume the hypotheses of Theorem \ref{theo:FF} hold. Then 
for each non-degenerate projective class of central configurations $\vq \in \Fix(\bar F)$ 
in the maximal isotropy stratum of $\bar S=S/SO(d)$, 
the fixed point index $\ind(\vq, \bar F)$ and the 
Morse index $\mu_{\tilde U}(\vq)$ 
of $\tilde U$ at $\vq$ are related by 
the equality 
\[
\ind(\vq, \bar F) = (-1)^{\mu_{\tilde U}(\vq)+\epsilon},
\]
where $\epsilon = 
d(n-1) -1 - d(d-1)/2$.
\end{coro}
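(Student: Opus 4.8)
The plan is to read the fixed point index off the linearization of $\bar F$ and to match it with the Morse index by means of Proposition \ref{propo:IFD2}. Since $\vq$ lies in the maximal isotropy stratum, its $SO(d)$-isotropy is trivial, the orbit $SO(d)\cdot\vq$ is an embedded submanifold of $S$ of dimension $d(d-1)/2$, and $\pi$ is a submersion near $\vq$; hence $\bar S$ is a manifold of dimension $\epsilon = d(n-1)-1-d(d-1)/2$ around $\pi(\vq)$. As the class is non-degenerate, $I-\bar F'(\pi(\vq))$ is invertible and the index of an isolated non-degenerate fixed point is
\[
\ind(\vq,\bar F) = \operatorname{sign}\det\bigl(I - \bar F'(\pi(\vq))\bigr) = (-1)^{\nu},
\]
where $\nu$ counts the real eigenvalues of $\bar F'(\pi(\vq))$ that exceed $1$.

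Next I would identify $\bar F'(\pi(\vq))$ with a block of $F'(\vq)$. Equivariance (Proposition \ref{propo:inducedF}) forces $F$ to fix the entire orbit pointwise, since $F(g\vq)=gF(\vq)=g\vq$; therefore $F'(\vq)$ restricts to the identity on the orbit tangent space $T_\vq(SO(d)\cdot\vq)=\{\Omega\vq:\Omega\in\mathfrak{so}(d)\}$, contributing the eigenvalue $1$ with multiplicity $d(d-1)/2$. In the coordinates of Proposition \ref{propo:IFD2} the operator $F'(\vq)$ is symmetric, because $-\alpha U(\vq)(I-F'(\vq))$ equals the symmetric Hessian $D^2\tilde U(\vq)$; since it preserves the orbit tangent it also preserves the orthogonal complement $N_\vq$, and differentiating $\bar F\circ\pi=\pi\circ F$ identifies $\bar F'(\pi(\vq))$ with $A:=F'(\vq)|_{N_\vq}$, a symmetric operator on the $\epsilon$-dimensional space $N_\vq$ with real spectrum. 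Thus the spectrum of $F'(\vq)$ on $T_\vq S$ is that of $A$ together with $d(d-1)/2$ copies of $1$, and non-degeneracy means $1\notin\operatorname{spec}(A)$.

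Then I invoke Proposition \ref{propo:IFD2} in the form $D^2\tilde U(\vq)=c\,(I-F'(\vq))$ with $c=-\alpha U(\vq)<0$ (here $\alpha>0$ and $U(\vq)>0$). The negative eigenvalues of the Hessian are exactly the directions where $I-F'(\vq)$ is positive, i.e. where the eigenvalue of $F'(\vq)$ is strictly less than $1$. The orbit directions give eigenvalue $1$ (a Morse--Bott degeneracy of the Hessian) and contribute nothing, so $\mu_{\tilde U}(\vq)=\#\{\text{eigenvalues of }A<1\}$. Counting dimensions on $N_\vq$, the $\epsilon$ real eigenvalues of $A$ split into those $<1$ and those $>1$, whence $\nu=\epsilon-\mu_{\tilde U}(\vq)$, and substituting gives
\[
\ind(\vq,\bar F) = (-1)^{\epsilon-\mu_{\tilde U}(\vq)} = (-1)^{\mu_{\tilde U}(\vq)+\epsilon}.
\]

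The main obstacle is the middle step: justifying that $\bar F'(\pi(\vq))$ really is the restriction of $F'(\vq)$ to the orbit-normal space $N_\vq$ and that this restriction is symmetric. This rests on the slice/submersion structure of $\pi$ on the free stratum (so that $\pi'(\vq)$ identifies $N_\vq$ with $T_{\pi(\vq)}\bar S$) together with the symmetry of $F'(\vq)$ supplied by Proposition \ref{propo:IFD2}, which is what guarantees that the invariant orbit tangent has an invariant orthogonal complement and that $A$ has real eigenvalues, so that the sign-of-determinant computation of the index is legitimate.
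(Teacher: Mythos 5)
Your proof is correct and follows essentially the same route as the paper: both use Proposition \ref{propo:IFD2} to convert the sign-of-determinant formula for the fixed point index into a count of eigenvalues of $D^2\tilde U$, with the orbit directions accounting for the $d(d-1)/2$-dimensional kernel (eigenvalue $1$ of $F'$). The only difference is that you spell out the submersion/slice identification of $\bar F'(\pi(\vq))$ with $F'(\vq)|_{N_\vq}$ and the symmetry of $F'(\vq)$, steps the paper leaves implicit.
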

\begin{proof}
Since $\tilde U$ is $SO(d)$-invariant (with diagonal action), 
and the $SO(d)$-orbits in $S$ with maximal isotropy type have dimension $d(d-1)/2$, 
the Hessian $D^2\tilde U$ has $d(d-1)/2$-dimensional kernel,
if $\vq$ is non-degenerate. 
By proposition \ref{propo:IFD2}, 
if follows that 
$F'$ has a $d(d-1)/2$-multiple eigenvalue $1$, 
and $\ind(\vq, \bar F)=(-1)^c$,
where $c$ is the number of negative eigenvalues of $-D^2\tilde U$,
i.e. the number of positive eigenvalues of $D^2\tilde U$,
which is equal to 
\( 
\dim(S)-d(d-1)/2 - \mu_{\tilde U}(\vq) \). 
Therefore 
\[
\ind(\vq,\bar F ) = (-1)^ {\dim(S)-d(d-1)/2 - \mu_{\tilde U}(\vq) } 
= (-1) ^ {\epsilon- \mu_{\tilde U}(\vq) }  = (-1) ^ {\mu_{\tilde U}(\vq) + \epsilon } 
\]
where $\epsilon=\dim(S) -d(d-1)/2 = d(n-1) - 1 - d(d-1)/2$.
\end{proof}


\begin{coro}
\label{coro:PP}
Let $U$ be the Newton potential in \eqref{eq:newtonianU}, 
with positive masses, $\alpha>0$, and $\dim(E)=2$. 
Then $X\approx \CC^n\smallsetminus \Delta$, 
and $S/SO(2) \approx \PP^{n-2}(\CC)_0 \subset \PP^{n-2}(\CC)$,
where $\PP^{n-2}(\CC)_0$ is the subset 
of $\PP^{n-1}(\CC)$ defined in projective coordinates as 
$\PP^{n-2}(\CC)_0 \cong \{ [z_1:\ldots, z_n] \in \PP^n : 
\sum_j m_j z_j = 0, \forall i,j, z_i \neq z_j \}$. 
Then for each non-degenerate projective class
of central configurations 
$q\in \Fix(\overline{F})$, with 
 $\overline{F} \from \PP^{n-2}_0(\CC) \to \PP^{n-2}(\CC)$,
\[
\ind(\vq,\overline{F}) = (-1)^{\mu_{\overline{U}}(\vq)}~.
\]
\end{coro}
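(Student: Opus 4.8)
The plan is to obtain this statement as the special case $d=2$ of Corollary~\ref{coro:index}, the only point of substance being that in planar dimension the correction exponent $\epsilon$ turns out to be even, so the sign $(-1)^\epsilon$ disappears. First I would fix the complex structure: identifying $E=\RR^2$ with $\CC$ turns $E^n$ into $\CC^n$ and hence $X=E^n\smallsetminus\Delta$ into $\CC^n\smallsetminus\Delta$. Since the constraint $\sum_j m_j\vq_j=0$ defining $Y$ is $\CC$-linear, $Y$ becomes the complex hyperplane $\{z\in\CC^n:\sum_j m_j z_j=0\}\cong\CC^{n-1}$, and the inertia form $\norm{\vq}_M^2=\sum_j m_j\abs{z_j}^2$ is exactly the squared norm of the mass-weighted Hermitian metric on $Y$. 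Thus $S$ is the unit sphere $S^{2n-3}\subset\CC^{n-1}$, and the diagonal $SO(2)$-action is multiplication by unit complex scalars, i.e.\ the Hopf action. This action is free on $S$ (as $e^{i\theta}z=z$ with $z\neq\zero$ forces $\theta=0$), so $S/SO(2)=S^{2n-3}/S^1$ is the smooth manifold $\PP^{n-2}(\CC)$; deleting the collision locus yields the claimed identification $S/SO(2)\approx\PP^{n-2}(\CC)_0$, and the maximal isotropy stratum is all of it.

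Next I would verify that the hypotheses of Theorem~\ref{theo:FF} hold for the planar Newtonian potential: it is rotation-invariant, so $G\supset SO(2)$; all masses are positive by assumption; $U>0$ because each summand $m_im_j\norm{\vq_i-\vq_j}^{-\alpha}$ is positive; and Property~\eqref{eq:propertyFF} was already checked above for \eqref{eq:newtonianU}. With these in force, Corollary~\ref{coro:index} applies to every non-degenerate projective class $\vq\in\Fix(\overline F)$ and gives $\ind(\vq,\overline F)=(-1)^{\mu_{\tilde U}(\vq)+\epsilon}$ with $\epsilon=d(n-1)-1-d(d-1)/2$. The heart of the argument, and the only place where $d=2$ is special, is the parity of $\epsilon$: setting $d=2$ gives
\[
\epsilon=2(n-1)-1-\tfrac{2\cdot 1}{2}=2n-4=2(n-2),
\]
which is even, so $(-1)^\epsilon=1$ and the formula collapses to $\ind(\vq,\overline F)=(-1)^{\mu_{\tilde U}(\vq)}$.

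Finally I would reconcile the two Morse-index notations, arguing that $\mu_{\tilde U}(\vq)=\mu_{\overline U}(\vq)$: because $U$ is constant along $SO(2)$-orbits, the $1$-dimensional orbit direction lies in the kernel of $D^2\tilde U$ and contributes neither a positive nor a negative eigenvalue, so the count of negative eigenvalues is unchanged upon passing to $\PP^{n-2}(\CC)_0$. This yields the stated equality $\ind(\vq,\overline F)=(-1)^{\mu_{\overline U}(\vq)}$. I do not expect any analytic obstacle; the entire content is the geometric recognition of $S/SO(2)$ as complex projective space together with the elementary observation that the dimension correction $\epsilon$ is even exactly when $d=2$.
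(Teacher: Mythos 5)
Your proposal is correct and follows essentially the same route as the paper: the paper's entire proof is the one-line parity computation $\epsilon = d(n-1)-1-d(d-1)/2 = 2(n-2)$ for $d=2$, which is exactly the heart of your argument. The additional details you supply (the Hopf-action identification $S/SO(2)\approx\PP^{n-2}(\CC)_0$, the verification of the hypotheses of Theorem~\ref{theo:FF}, and the reconciliation $\mu_{\tilde U}=\mu_{\overline U}$) are left implicit in the paper but are consistent with it.
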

\begin{proof}
If $d=2$, then $d(n-1)-1-d(d-1)/2 = 2(n-2)$.
\end{proof}

\section{An example of non-planar relative equilibrium }
\label{sec:examples}

\begin{example}[Non-central and non-planar equilibrium solution]
\label{example:c1c2c3}
In $E\cong \RR^3$, 
let $R_x$, $R_y$ and $R_z$ denote rotations of angle $\pi$ 
around the three coordinate axes. 
Fix three non-zero constants $c_1,c_2,c_3$.
Consider the problem with $6$ bodies in $E$, symmetric with respect 
to the group $K$ with non-trivial elements of $\Sigma_6 \times SO(3)$
\[
((34)(56), R_x), 
((12)(56), R_y ),
((12)(34), R_z)~.
\] 
Assume $m_j=1$, for $j=1,\ldots, 6$, and let 
$U$ be the potential  defined on $E^6$ by 
\[
U(\vq) = \sum_{i<j} \dfrac{1-\gamma_i\gamma_j}{\norm{\vq_i - \vq_j}}
\] 
where 
\[
\gamma_1=\gamma_2 = c_1, \quad
\gamma_3=\gamma_4 = c_2, \quad
\gamma_5=\gamma_6 = c_3~.
\]
Now, $U$ is invariant with respect to $K$, and the vertical cylinder 
is $K$-invariant: it follows from 
\ref{lemma:c1c2c3}
that critical points of the restriction of $U$ to $C^K$ are 
equilibrium configurations. 

In other words, three pairs of bodies 
of unit masses, each pair charged with charge $c_j$,
are constrained each pair to belong to one of the coordinate axes
and to be symmetric with respect to the origin.  

The space $X^K = Y^K$ has dimension $3$, and can be parametrized by 
$(x,y,z)$, where $x$, $y$ and $z$ are (respectively) the coordinates along 
the corresponding axis of particles 1, 3 and 5. 
The generic  configuration $\vq\in X^K$ can be written as 
\[
\begin{aligned}
\vq_1 & =  (x,0,0) &&& \vq_2 & = (-x,0,0) \\
\vq_3 & =  (0,y,0) &&& \vq_4 & = (0,-y,0) \\
\vq_5 & = (0,0,z) &&& \vq_6  & = (0,0,-z),\\
\end{aligned}
\]
and the potential $U$ restricted to $X^K$ in such coordinates is 
\[
U(x,y,z) = 
\dfrac{1-c_1^2}{2\abs{x}} 
+
\dfrac{1-c_2^2}{2\abs{y}} 
+
\dfrac{1-c_3^2}{2\abs{z}} 
+
4 \dfrac{1-c_1c_2}{\sqrt{x^2+y^2}}
+ 
4 \dfrac{1-c_1c_3}{\sqrt{x^2+z^2}}
+ 
4 \dfrac{1-c_2c_3}{\sqrt{y^2+z^2}}~.
\]
The vertical cylinder $C^K\subset X^K$  is 
\[
\begin{aligned}
C^K &  = \{ \vq \in X^K : \langle P\vq,\vq\rangle_M = 2\}
\\
&= \{  (x,y,z) \in X^K : 
2x^2 + 2 y^2 = 2 \} ~.
\end{aligned}
\]
Hence an equilibrium solution is a critical point of 
$U(\cos t, \sin t, z)$ with positive value $U>0$.  
Now, assume 
\begin{equation}\label{eq:inequal0}
c_1>1, \quad c_2<-1, \quad  c_3<-1~.
\end{equation} 
Then 
$1-c_1^2 <0$, $1-c_2^2 <0$, $1-c_3^2<0$, 
$1-c_2c_3<0$; moreover, 
$1-c_1c_2>0$ and  $1-c_1c_3>0$.
The restricted potential $U$ is defined as 
\[
U(t,z) = 
\dfrac{1-c_1^2}{2\abs{\cos t}} 
+
\dfrac{1-c_2^2}{2\abs{\sin t}} 
+
\dfrac{1-c_3^2}{2z} 
+
4 (1-c_1c_2) 
+ 
4 \dfrac{1-c_1c_3}{\sqrt{\cos^2 t+z^2}}
+ 
4 \dfrac{1-c_2c_3}{\sqrt{\sin^2 t+z^2}},
\]
and is 
defined on the strip 
$(t,z) \in T= (0,\pi/2)\times (0,+\infty)$. 

If 
\begin{equation}
\label{eq:inequal1}
1-c_1^2 + 8 (1-c_1c_3)<0,
\end{equation}
then for each $(t,z)$
\[
\begin{aligned}
\frac{1-c_1^2}{2\abs{\cos t}} + 4 \frac{1-c_1c_3}{\sqrt{\cos^2 t + z^2}} &
 < 
\frac{1-c_1^2}{2\abs{\cos t}} + 4 \frac{1-c_1c_3}{\abs{\cos t} }  \\
& = \frac{1}{2\abs{\cos{t}}}
\left(
1 - c_1^2 + 8 (1-c_1c_3) 
\right) < 0 
\end{aligned}
\]
and hence  $U(t,z)< 4(1-c_1c_2)$ for each $(t,z)\in T$ 
and $U\to -\infty$ on the boundary of $T$. 

Furthermore, for each $t\in (0,\pi/2)$, 
\[
\begin{aligned}
\dfrac{\partial U}{\partial z} & =  
\dfrac{c_3^2-1}{2z^2} 
+
4z\dfrac{c_2c_3-1}{(z^2+\sin^2t)^{3/2} } 
- 
4z \dfrac{1-c_1c_3}{(z^2+\cos^2 t)^{3/2}}  
\\
&  < 
\dfrac{c_3^2-1}{2z^2} 
+
4\dfrac{c_2c_3-1}{z^2} 
- 
4 z \dfrac{1-c_1c_3}{(z^2+1)^{3/2}}~.
\end{aligned}
\]
It follows that if 
\begin{equation}\label{eq:inequal2}
c_3^2 - 1 + 8 (c_2c_3-1) < 8 ( 1 - c_1c_3),
\end{equation}
then for every $t\in (0,\pi/2)$ there  exists 
$z_0$ such that $z>z_0 \implies \frac{\partial U}{\partial z}(t,z) < 0$.

Thus, whenever both \ref{eq:inequal1} and \ref{eq:inequal2} hold, 
the restriction $U(t,z)$ attains its maximum in the interior of the strip $T$. 
Such a maximum $(t_m,z_m)$ corresponds to an equilibrium configuration if and only if 
$U(t_m,z_m)>0$, from \ref{lemma:c1c2c3}. 
Note that if $t=\pi/4$ and $z=1/2$ then 
\[
U(\pi/4,z) = 
\frac{\sqrt{2}}{2} (2-c_1^2-c_2^2) + 4(1-c_1c_2)
+ 1-c_3^2 + \frac{4(2-c_3(c_1+c_2))}{\sqrt{3/4}},
\] 
which is positive, for example, if $c_1=20$ and $c_2=c_3=-2$. 
Such coefficients satisfy \eqref{eq:inequal0},
\eqref{eq:inequal1} and \eqref{eq:inequal2}, and therefore 
for such a choice of $c_i$ 
there exist non-planar relative equilibrium configurations.

\end{example}



\begin{thebibliography}{10}

\bibitem{MR1320359}
{\sc Albouy, A.}
\newblock Symétrie des configurations centrales de quatre corps.
\newblock {\em C. R. Acad. Sci. Paris Sér. I Math. 320}, 2 (1995), 217--220.

\bibitem{ac98}
{\sc Albouy, A., and Chenciner, A.}
\newblock Le probl\`eme des $n$ corps et les distances mutuelles.
\newblock {\em Invent. Math. 131}, 1 (1998), 151--184.

\bibitem{MR2925390}
{\sc Albouy, A., and Kaloshin, V.}
\newblock Finiteness of central configurations of five bodies in the plane.
\newblock {\em Ann. of Math. (2) 176}, 1 (2012), 535--588.

\bibitem{betti1877sopra}
{\sc Betti, E.}
\newblock Sopra il moto di un sistema di un numero qualunque di punti che si
  attraggono o si respingono tra loro.
\newblock {\em Annali di Matematica Pura ed Applicata (1867-1897) 8}, 1 (1877),
  301--311.

\bibitem{MR2372989}
{\sc Ferrario, D.~L.}
\newblock Planar central configurations as fixed points.
\newblock {\em J. Fixed Point Theory Appl. 2}, 2 (2007), 277--291.

\bibitem{HaM2207019}
{\sc Hampton, M., and Moeckel, R.}
\newblock Finiteness of relative equilibria of the four-body problem.
\newblock {\em Invent. Math. 163}, 2 (2006), 289--312.

\bibitem{Mo90}
{\sc Moeckel, R.}
\newblock On central configurations.
\newblock {\em Math. Z. 205}, 4 (1990), 499--517.

\bibitem{MR856309}
{\sc Pacella, F.}
\newblock Central configurations of the {$N$}-body problem via equivariant
  {M}orse theory.
\newblock {\em Arch. Rational Mech. Anal. 97}, 1 (1987), 59--74.

\bibitem{palais}
{\sc Palais, R.~S.}
\newblock The principle of symmetric criticality.
\newblock {\em Comm. Math. Phys. 69}, 1 (1979), 19--30.

\bibitem{MR0321389}
{\sc Palmore, J.~I.}
\newblock Classifying relative equilibria. {I}.
\newblock {\em Bull. Amer. Math. Soc. 79\/} (1973), 904--908.

\bibitem{MR0363076}
{\sc Palmore, J.~I.}
\newblock Classifying relative equilibria. {II}.
\newblock {\em Bull. Amer. Math. Soc. 81\/} (1975), 489--491.

\bibitem{MR0413647}
{\sc Palmore, J.~I.}
\newblock Classifying relative equilibria. {III}.
\newblock {\em Lett. Math. Phys. 1}, 1 (1975/76), 71--73.

\bibitem{MR1669486}
{\sc Roberts, G.~E.}
\newblock A continuum of relative equilibria in the five-body problem.
\newblock {\em Phys. D 127}, 3-4 (1999), 141--145.

\bibitem{MR2139425}
{\sc Saari, D.~G.}
\newblock {\em Collisions, rings, and other {N}ewtonian {$N$}-body problems},
  vol.~104 of {\em CBMS Regional Conference Series in Mathematics}.
\newblock Published for the Conference Board of the Mathematical Sciences,
  Washington, DC; by the American Mathematical Society, Providence, RI, 2005.

\bibitem{MR0278700}
{\sc Shub, M.}
\newblock Appendix to {S}male's paper: ``{D}iagonals and relative equilibria''.
\newblock In {\em Manifolds -- {A}msterdam 1970 ({P}roc. {N}uffic {S}ummer
  {S}chool)}, Lecture Notes in Mathematics, Vol. 197. Springer, Berlin, 1971,
  pp.~199--201.

\bibitem{MR0309153}
{\sc Smale, S.}
\newblock Topology and mechanics. {I}.
\newblock {\em Invent. Math. 10\/} (1970), 305--331.

\bibitem{MR0321138}
{\sc Smale, S.}
\newblock Topology and mechanics. {II}. {T}he planar {$n$}-body problem.
\newblock {\em Invent. Math. 11\/} (1970), 45--64.

\bibitem{MR532830}
{\sc Spivak, M.}
\newblock {\em A comprehensive introduction to differential geometry. {V}ol.
  {I}}, second~ed.
\newblock Publish or Perish, Inc., Wilmington, Del., 1979.

\bibitem{wintner}
{\sc Wintner, A.}
\newblock {\em The {A}nalytical {F}oundations of {C}elestial {M}echanics}.
\newblock Princeton Mathematical Series, v. 5. Princeton University Press,
  Princeton, N. J., 1941.

\bibitem{Xia}
{\sc Xia, Z.}
\newblock Central configurations with many small masses.
\newblock {\em J. Differential Equations 91}, 1 (1991), 168--179.

\end{thebibliography}

\end{document}